\numberwithin{equation}{section}
\def\H{{\cal H}}
\def\R{\mathbb{R}}
\def\H1{H^1(\R)}
\newtheorem{thm}{Theorem}[section]
\newtheorem{lem}{Lemma}[section]
\newcommand{\Extend}[5]{\ext@arrow0099{\arrowfill@#1#2#3}{#4}{#5}}
\begin{document}

\setcounter{page}{1}

\title[Global Well-posedness for DNLS]{Global well-posedness on the derivative nonlinear
Schr\"{o}dinger equation}

\author{Yifei Wu}
\address{School of  Mathematical Sciences, Beijing Normal University, Laboratory of Mathematics and Complex Systems,
Ministry of Education, Beijing 100875, P.R.China}
\email{yifei@bnu.edu.cn}
\thanks{The author was partially supported
by the NSF of China (No. 11101042), and the Fundamental Research Funds for the Central Universities of China.}

\subjclass[2010]{Primary  35Q55; Secondary 35A01}


\keywords{Nonlinear Schr\"{o}dinger equation with derivative,
global well-posedness, energy space}

\maketitle

\begin{abstract}\noindent
As a continuation of the previous work \cite{Wu}, we consider the global well-posedness for the derivative nonlinear
Schr\"{o}dinger equation. We prove that it is globally well-posed in energy space, provided that the initial data $u_0\in H^1(\R)$ with $\|u_0\|_{L^2}< 2\sqrt{\pi}$.
\end{abstract}

\section{Introduction}
We study the following Cauchy problem of the nonlinear
Schr\"{o}dinger equation with derivative (DNLS):
 \begin{equation}\label{eqs:DNLS}
   \left\{ \aligned
    &i\partial_{t}u+\partial_{x}^{2}u=i\partial_x(|u|^2u),\qquad t\in \R, x\in \R,
    \\
    &u(0,x)  =u_0(x)\in   H^1(\mathbb{R}).
   \endaligned
  \right.
 \end{equation}
It arises from studying the propagation of
circularly polarized Alfv\'{e}n waves in magnetized plasma with a
constant magnetic field, see \cite{MOMT-PHY, M-PHY, SuSu-book} and the references therein.
The problem (\ref{eqs:DNLS}) is $L^2$-critical, and the equation is complete integrable.  Moreover,
the $H^1$-solution of (\ref{eqs:DNLS}) obeys the following mass,  energy, and momentum conservation laws,
\begin{align}
M_D(u(t))&:=\int_{\R}|u(t)|^2\,dx=M_D(u_0),\label{mass-law}\\
E_D(u(t))&:= \int_{\R}\Big(| u_x(t)|^2+\frac{3}{2}\text{Im}
|u(t)|^2u(t)\overline{u_x(t)}+\frac{1}{2}|u(t)|^6\Big)\,dx=E_D(u_0),\label{energy-law}\\
P_D(u(t))&:=\mbox{Im} \int_{\R}\bar u(t) u_x(t)\,dx-\frac 12\int_{\R}
|u(t)|^4\,dx=P_D(u_0).\label{mum-law}
\end{align}

Local well-posedness for the Cauchy problem (\ref{eqs:DNLS}) is
well-understood. It was proved in the energy space $H^1(\R)$ by Hayashi and Ozawa in \cite{Ha-93-DNLS, HaOz-92-DNLS,
HaOz-94-DNLS}, see also Guo and Tan \cite{GuTa91} for earlier result
in smooth spaces. The problem for rough data below the energy space, see \cite{BiLi-01-Illposed-DNLS-BO, Ta-99-DNLS-LWP,Ta-01-DNLS-GWP} for local well-posedness and ill-posedness.

The global well-posedness for (\ref{eqs:DNLS}) has been also widely
studied.
By using mass and energy conservation laws, and by developing the gauge
transformations, Hayashi and Ozawa \cite{HaOz-94-DNLS, Oz-96-DNLS}
proved that the problem (\ref{eqs:DNLS}) is globally well-posed in energy space
$H^1(\R)$ under the condition
\begin{equation}\label{small condition}
\|u_0\|_{L^2}<\sqrt{2\pi }.
\end{equation}
Here $2\pi$ is the mass of the ground state $Q$ of the corresponding elliptic problem. That is,  $Q$ is the unique (up to some
symmetries) positive solution of the following elliptic equation
\begin{equation}\label{elliptic}
-Q_{xx}+Q-\frac 3{16}Q^5=0.
\end{equation}
Since $\sqrt{2\pi }$ is just the mass of the ground state,
the condition of \eqref{small condition} was naturally used before to keep the energy to be positive.

However, the condition \eqref{small condition} was improved recently in our previous work \cite{Wu}. We prove that there exists a small constant $\varepsilon_*>0$, such that the problem (\ref{eqs:DNLS}) is still globally well-posed in energy space when the initial data satisfies $\|u_0\|_{L^2}<\sqrt{2\pi }+\varepsilon_*.$ The result implies that for the problem \eqref{eqs:DNLS}, the ground state mass $2\pi$ is not the threshold of the global well-posedness and blow-up. This is totally different from the $L^2$-critical power-type  Schr\"odinger equation (the nonlinearity $i\partial_x(|u|^2u)$ in (\ref{eqs:DNLS}) replaced by $-\frac{3}{16}|u|^4u$), see \cite{Wu} for some further discussion.
Results on the global well-posedness when the initial data of regularity below the energy space, see \cite{CKSTT-01-DNLS, CKSTT-02-DNLS, Miao-Wu-Xu:2011:DNLS,Ta-01-DNLS-GWP} for examples.

In this paper,  we continue to consider the $L^2$-assumption on initial data and obtain the global well-posedness as follows.
\begin{thm}\label{thm:main}  For any $u_0\in \H1$
with
\begin{equation}\label{small condition-further}
\int_{\R}|u_0(x)|^2\,dx <4\pi,
\end{equation}
the Cauchy problem (\ref{eqs:DNLS}) is globally
well-posed in $H^1(\R)$ and the solution $u$ satisfies
$$
\|u\|_{L^\infty_t H^1_x}\le C(\|u_0\|_{H^1}).
$$
\end{thm}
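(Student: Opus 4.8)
The plan is to reduce the global statement to a uniform a priori bound on $\|u(t)\|_{H^1}$ and to extract that bound from the three conservation laws. Since the Cauchy problem is locally well-posed in $H^1(\R)$ (Hayashi--Ozawa, \cite{HaOz-94-DNLS}), a standard continuation argument shows that it suffices to prove that, on any interval of existence, $\|u(t)\|_{H^1}$ is controlled by a constant depending only on $\|u_0\|_{H^1}$. Mass conservation \eqref{mass-law} already pins $\|u(t)\|_{L^2}=\|u_0\|_{L^2}$, so writing $m:=\|u_0\|_{L^2}^2$ the whole game is to bound $\|u_x(t)\|_{L^2}$.

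First I would remove the troublesome cubic term in the energy by the gauge transformation $v=u\exp\!\big(-\tfrac{3i}{4}\int_{-\infty}^x|u|^2\,dy\big)$. A direct computation gives $|v|=|u|$, hence $\|v\|_{L^2}=\|u\|_{L^2}$ and $\|v\|_{L^6}=\|u\|_{L^6}$, together with the clean identities
\[ E_D(u)=\int_\R|v_x|^2\,dx-\frac1{16}\int_\R|v|^6\,dx,\qquad P_D(u)=\mathrm{Im}\int_\R\bar v\,v_x\,dx+\frac14\int_\R|v|^4\,dx. \]
At this point the classical threshold is immediate: the sharp Gagliardo--Nirenberg inequality $\int_\R|v|^6\le\frac{4}{\pi^2}\|v_x\|_{L^2}^2\|v\|_{L^2}^4$, whose optimal constant is fixed by the ground state $Q$ of \eqref{elliptic} with $\|Q\|_{L^2}^2=2\pi$, yields $E_D(u)\ge\big(1-\tfrac{m^2}{4\pi^2}\big)\|v_x\|_{L^2}^2$, which closes only when $m<2\pi$, recovering the Hayashi--Ozawa condition \eqref{small condition}.

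The crux --- and the reason the threshold can be doubled to $4\pi$ --- is that the momentum \eqref{mum-law} has not yet been used. Writing $v=\rho e^{i\theta}$ with $\rho=|v|$, so that $\|v_x\|_{L^2}^2=\|\rho_x\|_{L^2}^2+\int_\R\rho^2\theta_x^2\,dx$ and $\mathrm{Im}\int_\R\bar v\,v_x\,dx=\int_\R\rho^2\theta_x\,dx$, the three conserved quantities read
\[ E_D=\int_\R\rho_x^2\,dx+\int_\R\rho^2\theta_x^2\,dx-\frac1{16}\int_\R\rho^6\,dx,\quad P_D=\int_\R\rho^2\theta_x\,dx+\frac14\int_\R\rho^4\,dx,\quad m=\int_\R\rho^2\,dx. \]
The plan is to combine these into a single sharp inequality bounding $\|v_x\|_{L^2}^2=E_D+\frac1{16}\int_\R\rho^6\,dx$ from above whenever $m<4\pi$. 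Heuristically, when $\int_\R\rho^6\,dx$ (equivalently $\int_\R\rho^4\,dx$) is large the focusing quintic term is dangerous, but then the momentum identity forces $\int_\R\rho^2\theta_x\,dx$ to carry a correspondingly large $\frac14\int_\R\rho^4\,dx$, and the induced phase kinetic energy $\int_\R\rho^2\theta_x^2\,dx\ge m^{-1}\big(P_D-\frac14\int_\R\rho^4\,dx\big)^2$ feeds back positively into $E_D$. The sharp threshold of this competition is exactly the supremum of the masses of the DNLS solitary waves $\phi_{\omega,c}$, which range over $(0,4\pi)$ and equal the ground-state mass $2\pi$ at $c=0$, the value $4\pi$ being approached only in the algebraic-soliton limit $c^2\to4\omega$; this is precisely why \eqref{small condition-further} is the natural condition.

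The main obstacle is proving this last coercivity inequality with the sharp constant. The soft steps above --- Cauchy--Schwarz on the phase and H\"older between $\int_\R\rho^4\,dx$ and $\int_\R\rho^6\,dx$ --- are each individually lossy and are saturated by different profiles, so a naive chaining cannot reach $4\pi$. The delicate point is to organize the energy and momentum identities (or, equivalently, to analyze the Euler--Lagrange equation of the associated constrained variational problem) so that equality propagates to the genuine solitary-wave profile and the extremal mass comes out to be exactly $4\pi$. Once the a priori bound $\|v_x\|_{L^2}\le C(\|u_0\|_{H^1})$ is in hand, undoing the gauge transformation and using the already-controlled $\|u\|_{L^2}$ and $\|u\|_{L^6}$ recovers $\|u_x\|_{L^2}\le C(\|u_0\|_{H^1})$, and the continuation argument promotes the local solution to a global one, which completes the proof.
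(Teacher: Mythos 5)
Your setup---local well-posedness plus a continuation argument, the gauge transformation to $v$, the observation that mass and energy alone only reach $2\pi$, and the identification of the momentum law as the extra resource---matches the paper exactly, and your Cauchy--Schwarz on the phase, $\int_\R\rho^2\theta_x^2\,dx\ge m^{-1}\big(\int_\R\rho^2\theta_x\,dx\big)^2$, is precisely what the paper implements by modulating $\phi=e^{i\alpha x}v$ and optimizing over $\alpha$. But the proof stops exactly where the theorem lives: you write that ``a naive chaining cannot reach $4\pi$'' and that ``the delicate point is to organize the energy and momentum identities'' so that the extremal mass comes out to $4\pi$---and then you do not do it. That coercivity step is the entire content of the theorem, so as it stands this is a genuine gap, not a proof. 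Your appeal to the two-parameter solitary waves $\phi_{\omega,c}$ and the algebraic-soliton limit is a plausibility heuristic for why $4\pi$ should be the right number, but the paper neither needs nor uses it (it explicitly avoids any property of the ground state beyond the sharp constants).

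The missing ingredients are concrete. First, the paper invokes a \emph{second} sharp Gagliardo--Nirenberg inequality, $\|f\|_{L^6}\le C_{GN}\|f\|_{L^4}^{8/9}\|f_x\|_{L^2}^{1/9}$ with $C_{GN}=3^{1/6}(2\pi)^{-1/9}$ (Agueh), applied to $\phi=e^{i\alpha x}v$; since $|\phi|=|v|$, this bounds $E(\phi)$ from below by $\big(C_{GN}^{-18}f^{-4}-\tfrac1{16}\big)\|v\|_{L^6}^6$ where $f=\|v\|_{L^4}^4/\|v\|_{L^6}^3$ is the ratio you never isolate. Second, feeding that bound back through the momentum identity and choosing $\alpha\sim\|v\|_{L^6}^3$ yields, along any sequence with $\|v_x(t_n)\|_{L^2}\to\infty$, the closed inequality $f_n^6\le m_0f_n^4-16m_0C_{GN}^{-18}+\epsilon$, i.e.\ a cubic inequality $X^3-m_0X^2+b<0$ in $X=f_n^2$ with $b=16m_0C_{GN}^{-18}-\epsilon$. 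The elementary observation that this cubic has no root in $[0,\infty)$ precisely when $F(\tfrac23m_0)\ge0$, which after letting $\epsilon\to0$ is the condition $m_0<6\sqrt3\,C_{GN}^{-9}=4\pi$, is what produces the threshold. Without the $L^6$--$L^4$--$\dot H^1$ sharp inequality and the cubic-equation analysis (or some substitute for them), your argument establishes the strategy but not the theorem.
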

Since $4\pi= 2\|Q\|_{L^2}^2$, we in fact prove that the Cauchy problem (\ref{eqs:DNLS}) is globally
well-posed in $H^1(\R)$,  when $\int_{\R}|u_0(x)|^2\,dx< 2\int_{\R}Q(x)^2\,dx$.


Developing by Hayashi and Ozawa, the gauge transformation is one of element tools to study the derivative nonlinear  Schr\"odinger equation. Let
\begin{equation}\label{gauge1}
v(t,x):=e^{-\frac{3}{4}i\int_{-\infty}^x |u(t,y)|^2\,dy}u(t,x),
\end{equation}
then from \eqref{eqs:DNLS}, $v$ is the solution of
\begin{equation}\label{eqs:DNLS-under-gauge1}
     i\partial_{t}v+\partial_{x}^{2}v=\frac i2|v|^2v_x-\frac i2v^2\bar{v}_x-\frac
     3{16}|v|^4v,
\end{equation}
with the initial data $v_0=e^{-\frac{3}{4}i\int_{-\infty}^x |u_0|^2\,dy}u_0$.
Moreover, $v$ obeys the same mass conservation law \eqref{mass-law}, while
the energy conservation law \eqref{energy-law}  is deduced to
\begin{equation}\label{energy-43}
E(v(t)):=\|v_x(t)\|_{L^2_x}^2-\frac{1}{16}\|v(t)\|^6_{L^6_x}=
E(v_0),
\end{equation}
and the  momentum conservation law \eqref{mum-law} is deduced to
\begin{equation}\label{mom-law}
P(v(t)):=\mbox{Im} \int_{\R}\bar v(t) v_x(t)\,dx+\frac 14\int_{\R}
|v(t)|^4\,dx=P(v_0).
\end{equation}

From the argument used in \cite{Wu}, to prove the global well-posedness for the DNLS, an important ingredient is the usage of the momentum conservation law.
We observe that the key point is to give a small control of the following term from \eqref{mom-law},
\begin{align}
\mbox{Im} \int_{\R}\bar v(t) v_x(t)\,dx.
\end{align}
Or, to be more exact, one may prove that
\begin{align}\label{key-bound}
-\mbox{Im} \int_{\R}\bar v(t) v_x(t)\,dx\le c\|v_x(t)\|_{L^2 }\|v(t)\|_{L^2 },
\end{align}
where $c$ is positive constant. It is trivial when $c=1$ by H\"older's inequality. Suppose that one can get the bound with a
suitable small constant $c$, then the global well-posedness were followed.
In \cite{Wu}, by using a variational argument,
we in fact proved  that if the mass is larger but close to $2\pi$, and there is a time sequence $\{t_n\}$ such that $\|v(t_n)\|_{H^1}$ tends to infinity,
then $v(t_n)$ is close to $Q$ up to some symmetries. Since $Q$ is real-valued, \eqref{key-bound} can be given for small $c>0$.

In this paper, we give a different argument to prove the bound \eqref{key-bound},
under some suitable but explicit assumption  on $L^2$-norm of the initial data. Our method here do not need to use the property of the ground state $Q$ of \eqref{elliptic}.
To give an explanation on our argument, we say that
if $\|v(t)\|_{H^1}$ tends to infinity, then by the momentum and energy conservation laws, \eqref{key-bound} is roughly deduced to
\begin{align*}
\frac14\|v(t)\|_{L^4}^4\thickapprox-\mbox{Im} \int_{\R}\bar v(t) v_x(t)\,dx\le c\|v_x(t)\|_{L^2 }\|v(t)\|_{L^2 }\thickapprox c\|v_0\|_{L^2 }\|v\|_{L^6 }^3.
\end{align*}
So to obtain the small bound $c$, we turn to obtain the smallness of the quantity
$$f(t):=\|v(t)\|_{L^4}^4\big/\|v(t)\|_{L^6 }^3.
$$
Indeed, we shall prove that $f(t)^2$ obeys some cubic inequality.
Then we find that, the settlement to the global well-posedness is turned to find the solution to an elementary cubic equation.

%
%

%

\vspace{0.5cm}
\section{The proof of Theorem \ref{thm:main}}

Let $v$ be  the function in \eqref{gauge1}, which is the solution of the equation \eqref{eqs:DNLS-under-gauge1}. Since
$$
u_x=e^{i\frac34 \int_{-\infty}^x |v(t,y)|^2\,dy}\big(i\frac34|v|^2v+v_x\big).
$$
Therefore, by the sharp Gagliardo-Nirenberg inequality (see
\cite{W}),
\begin{equation}\label{sharp Gagliardo-Nirenberg}
\|f\|_{L^6}^6\leq \frac{4}{\pi^2}\|f\|_{L^2}^4\|f_x\|_{L^2}^2,
\end{equation}
(which the equality is attained by $Q$), and mass conservation law, for any $t\in \R$,
\begin{align*}
\|u_x(t)\|_{L^2}\le & \|v_x(t)\|_{L^2}+\frac34\|v(t)\|_{L^6}^3
\le  \|v_x(t)\|_{L^2}+\frac3{2\pi}\|v(t)\|_{L^2}^2\|v_x(t)\|_{L^2}\\
\le & \big(1+\frac3{2\pi}\|u_0\|_{L^2}^2\big)\|v_x(t)\|_{L^2}.
\end{align*}
That is,  the boundedness of $u$ in $H^1$-norm is equivalent to the boundedness of $v$ in $H^1$-norm.
Therefore, to prove the theorem, we may consider the function $v$ in \eqref{gauge1} instead. To simply the notations,
from now on, we set
$$
E_0=E(v_0),\quad P_0=P(v_0), \quad m_0=M_D(v_0).
$$
Furthermore, we assume that $m_0>2\pi$. Otherwise, it has been proved the global well-posedness in \cite{HaOz-94-DNLS, Wu}.

Let $(-T_-(v_0),T_+(v_0))$ be the maximal lifespan of the solution $v$ of \eqref{eqs:DNLS-under-gauge1}. To prove  Theorem \ref{thm:main}, it is  sufficient to
obtain the (indeed uniformly)\emph{ a priori} estimate of the
solutions on $H^1$-norm, that is,
$$
\sup\limits_{t\in (-T_-(v_0),T_+(v_0))}\|v_x(t)\|_{L^2}< +\infty.
$$
As \cite{Wu}, we argue by contradiction. Suppose that there exists a
sequence $\{t_n\}$ with
$
t_n\to -T_-(v_0)$,  or   $T_+(v_0)$,
such that
\begin{equation}\label{infty-sequence}
\|v_x(t_n)\|_{L^2}\to +\infty,   \mbox{ as  } n\to\infty.
\end{equation}

Now we define the sequence $\{f_n\}$ by
$$
f_n=\frac{\|v(t_n)\|_{L^4}^4}{\|v(t_n)\|_{L^6}^3},
$$
then $\|v(t_n)\|_{L^4}^4=f_n \|v(t_n)\|_{L^6}^3$, and by the energy conservation law \eqref{energy-43},
\begin{equation}\label{15.48}
\|v(t_n)\|_{L^4}^8=16 f_n^2 \big(\|v_x(t_n)\|_{L^2}^2-E_0\big).
\end{equation}
Moreover,  we give the lower and upper bounds of $f_n$. We denote $C_{GN}$ to be  the sharp constant of the following Gagliardo-Nirenberg inequality,
\begin{equation}\label{sharp Gagliardo-Nirenberg2}
\|f\|_{L^6}\leq C_{GN} \|f\|_{L^4}^{\frac89}\|f_x\|_{L^2}^{\frac19}.
\end{equation}
The best constant and the optimal functions for this inequality was obtained by Agueh \cite{Agueh}. More precisely, an optimal function is written as
$$
\Psi(x)=\Big(x^2+1\Big)^{-\frac12},
$$
and the best constant $C_{GN}=3^{\frac16}(2\pi)^{-\frac19}$. In particular, $\sqrt 2\Psi$ is the unique (up to symmetries) radial ground state of the following elliptic equation,
$$
\partial_{xx}\psi-\psi^3+\frac34 \psi^5=0.
$$
Now we have
\begin{lem}\label{lem:fn-bound}
There exists an $\varepsilon_n: \varepsilon_n\to 0$ as $n\to \infty$, such that
\begin{align}
2C_{GN}^{-\frac92}+\varepsilon_n\le f_n\le \sqrt{m_0}.
\end{align}
\end{lem}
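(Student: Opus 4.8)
The two bounds are of rather different natures, so I would establish them separately, and the upper bound does not use the dynamics at all. Interpolating $L^4$ between $L^2$ and $L^6$ by H\"older's inequality, writing $|v(t_n)|^4=|v(t_n)|^3\cdot|v(t_n)|$ and applying H\"older with conjugate exponents $2,2$, gives
\begin{equation*}
\|v(t_n)\|_{L^4}^4 \le \|v(t_n)\|_{L^6}^3\,\|v(t_n)\|_{L^2}.
\end{equation*}
Dividing by $\|v(t_n)\|_{L^6}^3$ and invoking mass conservation $\|v(t_n)\|_{L^2}^2=m_0$ yields $f_n\le \|v(t_n)\|_{L^2}=\sqrt{m_0}$ for every $n$, which is precisely the claimed upper bound.

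For the lower bound I would feed the sharp Gagliardo-Nirenberg inequality \eqref{sharp Gagliardo-Nirenberg2} into the definition of $f_n$. Raising \eqref{sharp Gagliardo-Nirenberg2} (applied to $v(t_n)$) to the ninth power gives $\|v(t_n)\|_{L^6}^9\le C_{GN}^9\|v(t_n)\|_{L^4}^8\|v_x(t_n)\|_{L^2}$. Substituting the identity $\|v(t_n)\|_{L^4}^8=f_n^2\|v(t_n)\|_{L^6}^6$, which is immediate from the definition of $f_n$, and cancelling the factor $\|v(t_n)\|_{L^6}^6$ isolates
\begin{equation*}
f_n^2 \ge C_{GN}^{-9}\,\frac{\|v(t_n)\|_{L^6}^3}{\|v_x(t_n)\|_{L^2}}.
\end{equation*}
I would then bring in the energy conservation law \eqref{energy-43} in the form $\|v(t_n)\|_{L^6}^6=16\big(\|v_x(t_n)\|_{L^2}^2-E_0\big)$, so that $\|v(t_n)\|_{L^6}^3=4\big(\|v_x(t_n)\|_{L^2}^2-E_0\big)^{1/2}$ once $n$ is large enough that $\|v_x(t_n)\|_{L^2}^2>E_0$. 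This produces
\begin{equation*}
f_n^2 \ge 4C_{GN}^{-9}\Big(1-\frac{E_0}{\|v_x(t_n)\|_{L^2}^2}\Big)^{1/2}.
\end{equation*}
Since $\big(2C_{GN}^{-9/2}\big)^2=4C_{GN}^{-9}$ and the blow-up hypothesis \eqref{infty-sequence} forces $\|v_x(t_n)\|_{L^2}\to\infty$, the right-hand side tends to $\big(2C_{GN}^{-9/2}\big)^2$; defining $\varepsilon_n$ through $2C_{GN}^{-9/2}+\varepsilon_n=2C_{GN}^{-9/2}\big(1-E_0/\|v_x(t_n)\|_{L^2}^2\big)^{1/4}$ gives $\varepsilon_n\to 0$ and the desired lower bound.

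The computation is essentially routine; the two points requiring care are the bookkeeping of the fractional exponents generated by the sharp pair $(8/9,1/9)$ in \eqref{sharp Gagliardo-Nirenberg2}, and the handling of the energy correction $E_0$. The latter is exactly the origin of the error term $\varepsilon_n$: for fixed $n$ the lower bound is $2C_{GN}^{-9/2}$ multiplied by the factor $\big(1-E_0/\|v_x(t_n)\|_{L^2}^2\big)^{1/4}$, which equals $1$ only asymptotically and may sit on either side of $1$ according to the sign of $E_0$. Thus the sharp constant $2C_{GN}^{-9/2}$ is attained only in the limit, and it is essential that we work along the blow-up sequence $\{t_n\}$ rather than at a fixed time. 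I expect no genuine obstacle beyond this; in particular the upper bound is unconditional, and the lower bound uses only the sharp constant $C_{GN}$ and the conservation laws, not any property of the ground state.
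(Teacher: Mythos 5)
Your proposal is correct and follows essentially the same route as the paper: H\"older's interpolation of $L^4$ between $L^2$ and $L^6$ plus mass conservation for the upper bound, and the sharp Gagliardo--Nirenberg inequality \eqref{sharp Gagliardo-Nirenberg2} combined with the energy identity \eqref{energy-43} for the lower bound (your factor $\big(1-E_0/\|v_x(t_n)\|_{L^2}^2\big)^{1/4}$ is algebraically identical to the paper's $\|v(t_n)\|_{L^6}^{3/2}\big(\|v(t_n)\|_{L^6}^6+16E_0\big)^{-1/4}$). The only cosmetic difference is that you work with $f_n^2$ rather than $f_n$ directly, and you rightly note the need for $\|v_x(t_n)\|_{L^2}^2>E_0$ for large $n$ and the possible sign of $\varepsilon_n$.
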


\begin{proof}[Proof of Lemma \ref{lem:fn-bound}]
First, by H\"older's inequality, we have
$$\|v(t_n)\|_{L^4}^4\le \|v(t_n)\|_{L^2}\|v(t_n)\|_{L^6}^3=\sqrt{m_0}\|v(t_n)\|_{L^6}^3,$$
and thus
$$
f_n\le \sqrt{m_0}.
$$
Furthermore, by the sharp Gagliardo-Nirenberg  \eqref{sharp Gagliardo-Nirenberg2} and the energy conservation law \eqref{energy-43}, we have
\begin{align*}
f_n&\ge \frac{\Big( C_{GN}^{-6}\|v(t_n)\|_{L^6}^6\>\|v_x(t_n)\|_{L^2}^{-\frac23}\Big)^{\frac34}}{\|v(t_n)\|_{L^6}^3}\notag\\
=&C_{GN}^{-\frac92}\frac{\|v(t_n)\|_{L^6}^{\frac32}}{\|v_x(t_n)\|_{L^2}^{\frac12}}
=2C_{GN}^{-\frac92}\frac{\|v(t_n)\|_{L^6}^{\frac32}}{\Big(\|v(t_n)\|_{L^6}^6+16E_0\Big)^{\frac14}}\notag\\
=&2C_{GN}^{-\frac92}+\varepsilon_n,
\end{align*}
where
$$
\varepsilon_n:=2C_{GN}^{-\frac92}\|v(t_n)\|_{L^6}^{\frac32}\frac{\|v(t_n)\|_{L^6}^{\frac32}-\big(\|v(t_n)\|_{L^6}^6+16E_0\big)^{\frac14}}{\|v(t_n)\|_{L^6}^{\frac32}\big(\|v(t_n)\|_{L^6}^6+16E_0\big)^{\frac14}}
.$$
Note that $\varepsilon_n=O(\|v_n\|_{L^6}^{-6})\to 0$, by the mean value theorem.
\end{proof}
In spirit of the paper \cite{Ba-ASNSP-04}, we define
$$
\phi(t,x)= e^{i\alpha x}v(t,x),
$$
where the parameter $\alpha$ will be given later. Then $\phi_x(t,x)= e^{i\alpha x}\big(i\alpha v(t,x)+v_x(t,x)\big)$, and thus
\begin{align*}
\|\phi_x\|_{L^2 }^2= \|v_x\|_{L^2 }^2+2\alpha\mbox{Im}\int \bar v\>v_x\,dx+\alpha^2\|v\|_{L^2 }^2.
\end{align*}
Minus $\frac1{16}\|\phi\|_{L^6 }^6=\frac1{16}\|v\|_{L^6 }^6$ in the two sides,  yields that
\begin{align*}
E(\phi)= E(v)+2\alpha\mbox{Im}\int \bar v\>v_x\,dx+\alpha^2\|v\|_{L^2 }^2.
\end{align*}
By mass, energy conservation laws \eqref{mass-law} and \eqref{energy-43}, this gives that
\begin{align}\label{Ene-Ene}
-2\alpha\mbox{Im}\int \bar v\>v_x\,dx=-E(\phi)+\alpha^2m_0+E_0.
\end{align}
On the other hand, by using \eqref{sharp Gagliardo-Nirenberg2}, we have
\begin{align*}
E(\phi(t_n))=&\|\phi_x(t_n)\|_{L^2 }^2-\frac1{16}\|\phi(t_n)\|_{L^6 }^6\\
\ge &C_{GN}^{-18}\|\phi(t_n)\|_{L^6 }^{18}\|\phi(t_n)\|_{L^4 }^{-16}-\frac1{16}\|\phi(t_n)\|_{L^6 }^6\\
=&\Big(C_{GN}^{-18}\|v(t_n)\|_{L^6 }^{12}\|v(t_n)\|_{L^4 }^{-16}-\frac1{16}\Big)\|\phi(t_n)\|_{L^6 }^6\\
=&\Big(C_{GN}^{-18}f_n^{-4}-\frac1{16}\Big)\|v(t_n)\|_{L^6 }^6.
\end{align*}
Therefore, this combining with \eqref{Ene-Ene},  gives that
\begin{align*}
-2\alpha\mbox{Im}\int \bar v(t_n,x)\>v_x(t_n,x)\,dx\le \Big(\frac1{16}-C_{GN}^{-18}f_n^{-4}\Big)\|v(t_n)\|_{L^6 }^6+\alpha^2m_0+E_0.
\end{align*}
This implies that for $\alpha>0$,
\begin{align}\label{15.56}
-\mbox{Im}\int \bar v(t_n,x)\>v_x(t_n,x)\,dx\le \Big(\frac1{16}-C_{GN}^{-18}f_n^{-4}\Big)\|v(t_n)\|_{L^6 }^6\cdot\frac1{2\alpha}+\frac12\alpha m_0+ \frac12\alpha^{-1}E_0.
\end{align}
We consider the case of $\frac1{16}-C_{GN}^{-18}f_n^{-4}< 0$ first. Then by the momentum conservation law \eqref{mom-law}, we have
\begin{align}\label{15.50}
\frac14 \|v(t_n)\|_{L^4}^4=-\mbox{Im}\int \bar v(t_n,x)\>v_x(t_n,x)\,dx+P_0.
\end{align}
Hence by \eqref{15.56} and choosing $\alpha=1$, we obtain
\begin{align*}
\|v(t_n)\|_{L^4}^4\le 2(m_0+E_0+2P_0).
\end{align*}
Therefore, by \eqref{15.48} and Lemma \ref{lem:fn-bound}, we have the boundedness of $\|v_x(t_n)\|_{L^2}$. This is a contradiction with \eqref{infty-sequence}.

Now we consider the case of $\frac1{16}-C_{GN}^{-18}f_n^{-4}\ge 0$.  We set
$$
\alpha=\frac14\sqrt{m_0^{-1}\Big(1-16C_{GN}^{-18}f_n^{-4}\Big)}\|v(t_n)\|_{L^6}^3,
$$
then $\alpha=\alpha_n\to \infty$ as $n\to \infty$, and \eqref{15.56} turns to
\begin{align}
-\mbox{Im}\int \bar v(t_n,x)\>v_x(t_n,x)\,dx\le \frac14\sqrt{m_0\Big(1-16C_{GN}^{-18}f_n^{-4}\Big)}\|v(t_n)\|_{L^6 }^3+\frac12\alpha_n^{-1}E_0. \label{Key-1}
\end{align}
Note that the remainder term
$$
\frac12\alpha_n^{-1}E_0\to 0,\quad \mbox{ as }\quad  n\to \infty.
$$
By \eqref{15.50} and \eqref{Key-1},
\begin{align*}
\|v(t_n)\|_{L^4}^4\le \sqrt{m_0\Big(1-16C_{GN}^{-18}f_n^{-4}\Big)}\|v(t_n)\|_{L^6 }^3+2\alpha_n^{-1}E_0+4P_0.
\end{align*}
This implies that
\begin{align*}
f_n\le \sqrt{m_0\Big(1-16C_{GN}^{-18}f_n^{-4}\Big)}+\big(2\alpha_n^{-1}E_0+4P_0\big)\|v(t_n)\|_{L^6 }^{-3}.
\end{align*}
Furthermore, it gives that
\begin{align}\label{inequality1}
f_n^6\le m_0f_n^4-16m_0C_{GN}^{-18}+f_n^4\mathcal R_n,
\end{align}
where
$$
\mathcal R_n=2 \sqrt{m_0\Big(1-16C_{GN}^{-18}f_n^{-4}\Big)}\>\big(2\alpha_n^{-1}E_0+4P_0\big)\|v(t_n)\|_{L^6 }^{-3}
+\big(2\alpha_n^{-1}E_0+4P_0\big)^2\|v(t_n)\|_{L^6 }^{-6}.
$$
By the lower and upper boundedness of $f_n$ from Lemma \ref{lem:fn-bound}, we have
$$
f_n^4\mathcal R_n=O(\|v(t_n)\|_{L^6 }^{-3})\to 0,\quad \mbox{ as } n\to \infty.
$$
Thus for any small and  fixed $\epsilon>0$, by choosing $n$ large enough, we have $f_n^4\mathcal R_n<\epsilon$.
Hence \eqref{inequality1} becomes
\begin{align}\label{inequality}
f_n^6\le m_0f_n^4-16m_0C_{GN}^{-18}+\epsilon.
\end{align}
Let $X=f_n^2$, then \eqref{inequality} turns to the inequality
\begin{align}\label{11.08}
X^3-m_0X^2+b<0,
\end{align}
where $b=16m_0C_{GN}^{-18}-\epsilon>0$.  Let
$$
F(X)=X^3-m_0X^2+b,
$$
then the function $F(X)$ attains its minimum value at $\frac23 m_0$ in the region of $[0,\infty)$. Therefore,
there are two positive solutions $X_1$ and $X_2$ solve  the equation
\begin{align}\label{cubicequation}
X^3-m_0X^2+b=0.
\end{align}
if and only if
$
F(\frac23m_0)<0.
$
In another word, the inequality \eqref{11.08} has no solution in the region of $[0,+\infty)$ if and only if
\begin{align}
F(\frac23m_0)\ge 0.\label{11.27}
\end{align}

Now the condition \eqref{11.27} is equivalent to
$$
\frac{8}{27}m_0^3-\frac49m_0^3+b\ge 0.
$$
By the arbitrariness of $\epsilon$,  it is deduced to $m_0< 6\sqrt{3}C_{GN}^{-9}=4\pi$. Therefore, we obtain that the problem \eqref{eqs:DNLS-under-gauge1} is global well-posedness when $m_0<4\pi$. This proves our main theorem.

One may expect to get some profit from the restriction $X \in (4C_{GN}^{-9},m_0)$ (rather than $[0,+\infty)$), according to Lemma \ref{lem:fn-bound}.
However, we will explain below that we in fact can not get any more from this. To see this, we note that
in this case of  $m_0\ge 4\pi$, \eqref{11.08} is solved in the region of $[0,+\infty)$ by
$$
X_1<X<X_2.
$$
Now we claim that
\begin{align}
4C_{GN}^{-9}<X_1<X_2<m_0.\label{15.09}
\end{align}

Indeed, first we observe that when $m_0\ge 4\pi$,
$$
\frac23m_0\ge \frac83\pi> 4C_{GN}^{-9}=\frac8{3\sqrt3}\pi.
$$
Moreover, by choosing $\epsilon$ small, we have
$$
F(4C_{GN}^{-9})=64C_{GN}^{-27}-\epsilon>0.
$$
These two facts imply that $4C_{GN}^{-9}<X_1$. Similarly, since
$$
m_0<\frac23m_0;\quad \mbox{ and }\quad F(m_0)=b>0,
$$
we have
$X_2<m_0$. In conclusion, we have
the claim \eqref{15.09}.
Therefore, the inequality \eqref{11.08} is always solvable in the region of $(4C_{GN}^{-9},m_0)$ when $m_0\ge 4\pi$. So we can not get the contradiction from the restriction of $(4C_{GN}^{-9},m_0)$.

\section*{Acknowledgements} The author would like to thank Professor Martial Agueh for some useful discussion on the sharp Gagliardo-Nirenberg inequality.

\end{document}